\def\pref#1{(\ref{#1})}
\theoremstyle{plain}
\newtheorem{prop}{Proposition}[section]
\newtheorem{thm}[prop]{Theorem}
\newtheorem{lem}[prop]{Lemma}
\newtheorem{cor}[prop]{Corollary}
\newtheorem{ques}[prop]{Question}
\theoremstyle{definition}
\newtheorem{rem}[prop]{Remark}
\def\V{\mathrm{V}}
\def\E{\mathrm{E}}
\def\ifof{if and only if }
\def\f#1#2{\frac{#1}{#2}}
\def\se{\subseteq}
\def\N{\mathrm{N}}
\def\b#1{\overline{#1}}
\def\sm{\setminus }
\newcommand{\tohi}{\emptyset}
\newcommand{\give}{$\Rightarrow$}
\newcommand{\rgive}{$\Leftarrow$}
\renewcommand{\l}{\left}
\renewcommand{\r}{\right}
\newcommand{\link}{\mathrm{link}}
\newcommand{\core}{\mathrm{core}}
\newcommand{\Q}{\mathbb{Q}}
\begin{document}
\date{}
\title{Some Combinatorial Characterizations of Gorenstein Graphs with Independence Number Less than Four}
\author{Mohammad Reza Oboudi, Ashkan Nikseresht\footnote{Corresponding author}\\
\it\small Department of Mathematics, College of Sciences,  Shiraz University \\
\it\small 71457-44776, Shiraz, Iran\\
\it\small E-mail: mr\_oboudi@yahoo.com  and mr\_oboudi@shirazu.ac.ir\\
\it\small E-mail: a\_nikseresht@shirazu.ac.ir
 }
 \maketitle

\begin{abstract}
Let $\alpha=\alpha(G)$ be the independence number of a simple graph $G$ with $n$ vertices and $I(G)$ be its edge
ideal in $S=K[x_1,\ldots, x_n]$. If $S/I(G)$ is Gorenstein, the graph $G$ is called Gorenstein over $K$ and if $G$ is
Gorenstein over every field, then we simply say that $G$ is Gorenstein. In this article, first we state a condition
equivalent to $G$ being Gorenstein and using this we give a characterization of Gorenstein graphs with $\alpha=2$.
Then we present some properties of Gorenstein graphs with $\alpha=3$ and as an application of these results we
characterize triangle-free Gorenstein graphs with $\alpha=3$.
\end{abstract}

Keywords: Gorenstein ring; Edge ideal; Triangle-free graph; Independence polynomial.\\
\indent 2010 Mathematical Subject Classification: 13F55, 05E40, 13H10.

                                        \section{Introduction}

Throughout this paper, $K$ is a field, $S=K[x_1,\ldots, x_n]$ and $G$ denotes a simple undirected graph with vertex set
$\V(G)=\{v_1,\ldots, v_n\}$ and edge set $\E(G)$. Recall that the \emph{edge ideal} $I(G)$ of $G$ is the ideal of $S$
generated by $\{x_ix_j|v_iv_j\in \E(G)\}$. Many researchers have studied how algebraic properties of $S/I(G)$ relates
to the combinatorial properties of $G$ (see \cite{hibi, large girth, tri-free, planar goren} and references therein).
An important algebraic property that recently has gained attention is being Gorenstein. Recall that Gorenstein rings
and Cohen-Macaulay rings (CM rings for short) are central concepts in commutative algebra. We refer the reader to
\cite{CM ring} for their definitions and basic properties. We say that $G$ is a \emph{Gorenstein} (resp.
\emph{Cohen-Macaulay}) graph over $K$, if $S/I(G)$ is a Gorenstein (resp. CM) ring. When $G$ is Gorenstein (resp. CM)
over every field, we say that $G$ is Gorenstein (resp. CM). In \cite{large girth} and \cite{tri-free} characterizations
of planar Gorenstein graphs of girth at least four and triangle-free Gorenstein graphs are presented, respectively.
(Recall that $G$ is said to be \emph{triangle-free} when no subgraph of $G$ is a triangle). Also in \cite{planar goren}
a condition on a planar graph equivalent to being Gorenstein is stated and proved. An importance of characterizing
Gorenstein graphs comes from the Charney-Davis conjecture which states that ``flag simplicial complexes'' which are
Gorenstein over $\Q$ satisfy a certain condition (see \cite[Problem 4]{frontier}). In \cite{frontier}, Richard P.
Stanley mentioned this conjecture as one of the ``outstanding open problems in algebraic combinatorics'' at the start
of the 21$^\mathrm{st}$ century. An approach to solve this conjecture is trying to give a characterization of
Gorenstein ``flag simplicial complexes'' which is equivalent to a characterization of Gorenstein graphs by \cite[Lemma
9.1.3]{hibi}.

Denote by $\alpha(G)$ the \emph{independence number} of $G$, that is, the maximum size of an independent set of $G$. It
is well-known that if $\alpha(G)=1$ (that is, $G$ is complete) then $G$ is Gorenstein \ifof $G=K_1$ or $G=K_2$ (where
$K_n$ denotes the complete graph on $n$ vertices). Moreover, it is easy to see  that if $\alpha(G)=2$, then $G$ is
Gorenstein \ifof $G$ is the complement of a cycle of length at least four (see Corollary \ref{goren alpha=2} below).
Therefore, it is natural to ask ``if $\alpha(G)=3$, then when is $G$ Gorenstein?'' In this paper, we present some
properties of Gorenstein graphs with $\alpha=3$ and find all Gorenstein graphs with $\alpha=3$ which are also
triangle-free. Before stating our main results and since we need some results on Gorenstein simplicial complexes, first
we briefly review simplicial complexes and their Stanley-Reisner ideal.

 Recall that a \emph{simplicial complex} $\Delta$ on the vertex set $V= \V(\Delta)=
\{v_1,\ldots, v_n\}$ is a family of subsets of $V$ (called \emph{faces}) with the property that $\{v_i\}\in \Delta$ for
each $i\in [n]=\{1,\ldots,n\}$ and if $A\se B\in \Delta$, then $A\in \Delta$. In the sequel, $\Delta$ always denotes a
simplicial complex. Thus the family $\Delta(G)$ of all cliques of a graph $G$ is a simplicial complex called the
\emph{clique complex} of $G$. Also $\Delta(\b G)$ is called the \emph{independence complex} of $G$, where $\b G$
denotes the complement of $G$. Note that the elements of $\Delta(\b G)$ are independent sets of $G$. The ideal of $S$
generated by $\{\prod_{v_i\in F} x_i|F\se V$ is a non-face of $\Delta\}$ is called the \emph{Stanley-Reisner ideal} of
$\Delta$ and is denoted by $I_\Delta$ and $S/I_\Delta$ is called the \emph{Stanley-Reisner algebra} of $\Delta$ over
$K$. Therefore we have $I_{\Delta(\b G)}= I(G)$. If the Stanley-Reisner algebra of $\Delta$ over $K$ is Gorenstein
(resp. CM), then $\Delta$ is called Gorenstein (resp. CM) over $K$. The relation between combinatorial properties of
$\Delta$ and algebraic properties of $S/I_\Delta$ is well-studied, see for example \cite{symbolic, hibi,stanley, our
chordal, my vdec} and the references therein.

The dimension of a face $F$ of $\Delta$ and the simplicial complex $\Delta$ are defined to be $|F|-1$ and
$\max\{\dim(F)|F\in \Delta\}$, respectively. Recall that $\alpha(G)$ denotes the \emph{independence number} of $G$,
that is, $\alpha(G)=\dim \Delta(\b G)+1$. A graph $G$ is called \emph{well-covered}, if all maximal independent sets of
$G$ have size $\alpha(G)$ and we say that $G$ is a \emph{W$_2$ graph}, if $|\V(G)|\geq 2$ and every pair of disjoint
independent sets of $G$ are contained in two disjoint maximum independent sets. In some texts, W$_2$ graphs are called
1-well-covered graphs. The reader can see \cite{W2} for more on W$_2$ graphs. In the following lemma, we have collected
two known results on W$_2$ graphs and their relation to Gorenstein graphs.

\begin{lem}\label{W2}
\begin{enumerate}
\item \label{Goren=> W2} (\cite[Lemma 3.1]{large girth} or \cite[Lemma 3.5]{tri-free}) If $G$ is a graph without
    isolated vertices and $G$ is Gorenstein over some field $K$, then $G$ is a W$_2$ graph.

\item \label{tri-free Goren} (\cite[Proposition 3.7]{tri-free}) If $G$ is triangle-free  and without isolated
    vertices, then $G$ is Gorenstein \ifof $G$ is W$_2$.
%

\end{enumerate}
\end{lem}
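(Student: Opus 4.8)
The plan is to pass from graphs to their independence complexes and work entirely with Stanley's homological criterion for Gorensteinness. Since $I(G)=I_{\Delta(\b G)}$, the ring $S/I(G)$ is Gorenstein over $K$ exactly when $\Delta(\b G)$ is Gorenstein over $K$. A vertex $v$ is a cone point of $\Delta(\b G)$ (that is, $\{v\}\cup F\in\Delta(\b G)$ for every face $F$) precisely when $v$ is independent from every other vertex, i.e.\ when $v$ is isolated in $G$; hence ``$G$ has no isolated vertices'' is equivalent to $\Delta(\b G)=\core\Delta(\b G)$. Thus under the standing hypothesis I may assume $\Delta:=\Delta(\b G)$ is a Gorenstein$^*$ complex, and I will use throughout that, by Stanley's criterion (see \cite{stanley}), being Gorenstein$^*$ over $K$ means $\Delta$ is Cohen--Macaulay over $K$ and $\tilde H_{\dim\link_\Delta F}(\link_\Delta F;K)\cong K$ for every face $F$ (including $F=\tohi$); equivalently, every link is a $K$-homology sphere.

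For part \pref{Goren=> W2} I would argue as follows. A Gorenstein$^*$ complex is doubly Cohen--Macaulay (2-CM); this is classical (see e.g.\ \cite{stanley}). Reading off the combinatorial content: $\Delta$ Cohen--Macaulay forces $\Delta$ to be pure, i.e.\ $G$ is well-covered, while the 2-CM condition says that for every vertex the deletion $\Delta\sm v$ is again Cohen--Macaulay of the same dimension, i.e.\ $G\sm v$ is well-covered with $\alpha(G\sm v)=\alpha(G)$ for every $v$. By the combinatorial characterization of W$_2$ (1-well-covered) graphs in terms of vertex deletions (\cite{W2}), these two facts together give that $G$ is W$_2$. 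Note that this already yields the forward implication of part \pref{tri-free Goren}, so the real content of \pref{tri-free Goren} is its converse.

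For the converse of \pref{tri-free Goren} I would show directly that, when $G$ is triangle-free, has no isolated vertices and is W$_2$, the complex $\Delta$ is Gorenstein$^*$ over every field. The key geometric observation is that $\Delta$ is a pseudomanifold without boundary, i.e.\ every codimension-one face lies in exactly two facets. Indeed, if $F$ is an independent set of size $\alpha-1$, then the graph induced on $\V(G)\sm\N[F]$ has independence number $1$, hence is complete, and since $G$ is triangle-free it is $K_1$ or $K_2$; thus $F$ extends to at most two maximum independent sets. On the other hand W$_2$ rules out a single extension: if $F$ extended uniquely to $M=F\cup\{u\}$, then applying the W$_2$ condition to the disjoint independent sets $F$ and $\{u\}$ would demand two disjoint maximum independent sets, one containing $F$ (forcing it to equal $M\ni u$) and one containing $u$, which is impossible. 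The same style of argument, applied to a vertex $v$ and a would-be isolated vertex $w$ of $G\sm\N[v]$, shows that no vertex-link of $\Delta$ has a cone point; since links of an exactly-two pseudomanifold are again exactly-two pseudomanifolds, every face-link of $\Delta$ is of this form, so $\Delta$ is a homology manifold.

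It then remains to promote this to the full homology-sphere condition over every field, and here I expect the main obstacle. The clean route is to prove that $\Delta$ is shellable (indeed vertex-decomposable), building a shelling order on the maximum independent sets from the triangle-free W$_2$ structure; shellability gives Cohen--Macaulayness over every field and forces the integral homology to be free. A shellable pseudomanifold without boundary is a $\mathbb Z$-homology sphere, so the top reduced homology of $\Delta$ and of each of its links is $K$ over every field, which is exactly Gorenstein$^*$ by Stanley's criterion; translating back, $G$ is Gorenstein over every field. The delicate point is precisely this field-independence: the exactly-two pseudomanifold and Cohen--Macaulay-over-$\Q$ conditions alone do not force a homology sphere (witness the real projective plane, a Cohen--Macaulay-over-$\Q$ exactly-two pseudomanifold that is not Gorenstein), so one must genuinely use the triangle-free W$_2$ hypotheses to exclude such non-orientable phenomena, either through an explicit shelling or through the structure theory of triangle-free well-covered graphs.
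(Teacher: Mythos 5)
Part \pref{Goren=> W2} of your proposal is sound, and is in substance the standard argument behind the cited result of Hoang--Minh--Trung: absence of isolated vertices gives $\core(\Delta(\b G))=\Delta(\b G)$, so the complex is Gorenstein$^*$; Gorenstein$^*$ implies doubly Cohen--Macaulay (Stanley/Baclawski); purity of $\Delta(\b G)$ and of each deletion $\Delta(\b G)\sm v$ translates into $G$ and every $G\sm v$ being well-covered with $\alpha(G\sm v)=\alpha(G)$; and Staples' Theorem 1 in \cite{W2} then yields W$_2$. Bear in mind that the paper itself gives no proof of this lemma at all --- it is quoted with citations --- so the benchmark here is simply correctness, and for part \pref{Goren=> W2} you meet it. Your pseudomanifold observation toward the converse of \pref{tri-free Goren} is also correct as far as it goes: for an independent set $F$ of size $\alpha(G)-1$, the graph $G_F$ is complete, hence $K_1$ or $K_2$ by triangle-freeness, and the W$_2$ condition applied to $F$ and the unique extending vertex rules out $K_1$, so every codimension-one face lies in exactly two facets.

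The genuine gap is that the converse of part \pref{tri-free Goren} --- which is the entire content of the cited Proposition 3.7 of Hoang--Trung, the main theorem of that paper --- is never actually proven. Two specific problems. First, the inference ``no vertex-link has a cone point, and the exactly-two condition passes to links, so $\Delta$ is a homology manifold'' is a non sequitur: a homology manifold requires every link to be a homology \emph{sphere}, which is precisely what is at stake (the suspension of a torus satisfies the exactly-two condition, has it inherited by all links, and has no cone points in links, yet is not a homology manifold). Second, and decisively, the shelling (or vertex decomposition) of $\Delta(\b G)$ on which your whole endgame rests is asserted as ``the clean route'' but never constructed; you yourself flag it as the main obstacle, and your real-projective-plane remark correctly shows that the facts you \emph{have} established (exactly-two pseudomanifold, CM over one field) cannot suffice. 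So everything hinges on the missing step. The cited proof does this work by a genuine induction on $\alpha(G)$ through the structure of triangle-free W$_2$ graphs --- using in particular that $G_v$ is again triangle-free and W$_2$ with $\alpha(G_v)=\alpha(G)-1$, so that all links of $\Delta(\b G)$ are complexes of the same kind, and then computing homology over $\mathbb{Z}$ to get a homology sphere over every field. Reproducing an argument of that sort (or an explicit shelling) is the real content your proposal defers; as written, you have proven part \pref{Goren=> W2} and the easy half of part \pref{tri-free Goren}, with the hard half left as an acknowledged conjecture.
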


Let $f_i$ be the number of $i$-dimensional faces of $\Delta$ (if $\Delta\neq \tohi$, we set $f_{-1}=1$), then $(f_{-1},
\ldots, f_{d-1})$ is called the \emph{$f$-vector} of $\Delta$, where $d-1=\dim(\Delta)$. Now define $h_i$'s such that $
h(t)= \sum_{i=0}^d h_it^i= \sum_{i=0}^d f_{i-1}t^i(1-t)^{d-i}$. Then $h(t)$ is called the \emph{$h$-polynomial} of
$\Delta$. Recall that the polynomial $I(G,x)=\sum_{i=0}^{\alpha(G)} a_ix^i$, where $a_i$ is the number of independent
sets of size $i$ in $G$ and $a_0=1$, is called the \emph{independence polynomial} of $G$. Note that $(a_0,a_1,\ldots,
a_{\alpha(G)})$ is the $f$-vector of $\Delta(\b G)$. There are many papers related to this polynomial in the
literature, see for example \cite{survey}.

For $F\in \Delta$, let $\link_\Delta(F)=\{A\sm F| F\se A\in \Delta\}$. If all maximal faces of $\Delta$ have the same
dimension and for each $F\in \Delta$, we have $\sum_{i=-1}^{d_F} (-1)^i f_i(F)= (-1)^{d_F}\quad (*)$, where $d_F= \dim
\link_\Delta(F)$ and $(f_i(F))_{i=-1}^{d_F}$ is the $f$-vector of $\link_\Delta(F)$, then $\Delta$ is said to be an
\emph{Euler complex}. Another complex constructed from $\Delta$ is $\core(\Delta)$. The set of vertices of
$\core(\Delta)$ is $V_C=\{v\in \V(\Delta)|\exists G\in \Delta \text{ such that } G\cup\{v\}\notin \Delta\}$ and
$\core(\Delta)= \{F\in \Delta|F\se V_C\}$.
                                        \section{Main results}

First we establish a theorem which presents a condition on graphs equivalent to being Gorenstein. For this, we need the
following lemma.
\begin{lem}\label{lem1}
If $G$ is a graph without any isolated vertex and Gorenstein over some field $K$, then $I(G,-1)=(-1)^{\alpha(G)}$.
\end{lem}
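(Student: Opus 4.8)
The plan is to reinterpret $I(G,-1)$ as (minus) the reduced Euler characteristic of the independence complex $\Delta=\Delta(\b G)$ and then exploit the fact that a Gorenstein complex behaves homologically like a sphere. Recall from the excerpt that $(a_0,\ldots,a_\alpha)$ is the $f$-vector of $\Delta$, so $a_i=f_{i-1}$ with $f_{-1}=1$ and $\dim\Delta=\alpha-1$, where $\alpha=\alpha(G)$. Writing $I(G,-1)=\sum_{i=0}^{\alpha}(-1)^i f_{i-1}$ and reindexing by $j=i-1$ gives $I(G,-1)=-\sum_{j=-1}^{\alpha-1}(-1)^j f_j$, that is, $I(G,-1)=-\tilde\chi(\Delta)$, where $\tilde\chi$ denotes the reduced Euler characteristic. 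Thus the lemma is equivalent to the assertion $\tilde\chi(\Delta)=(-1)^{\alpha-1}=(-1)^{\dim\Delta}$.

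First I would reduce to the core. A vertex $v$ fails to lie in $V_C$ exactly when $F\cup\{v\}\in\Delta$ for every $F\in\Delta$; for $\Delta=\Delta(\b G)$ this says that $v$ together with any independent set is again independent, which happens \ifof $v$ is adjacent to no other vertex, i.e. $v$ is isolated in $G$. Hence, since $G$ has no isolated vertex, $V_C=\V(\Delta)$ and $\core(\Delta)=\Delta$. This is the step that uses the hypothesis on isolated vertices, and it is what lets me pass from ``Gorenstein'' to the stronger situation in which the sphere-like behaviour is available.

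The key input is Stanley's homological characterization of Gorenstein complexes: $\Delta$ is Gorenstein over $K$ \ifof $\core(\Delta)$ is Gorenstein$^*$ over $K$, and a Gorenstein$^*$ complex is a $K$-homology sphere, meaning that for every face $F$ the link $\link_\Delta(F)$ has the reduced $K$-homology of a sphere of dimension $d_F=\dim\link_\Delta(F)$. Since here $\Delta=\core(\Delta)$ is Gorenstein, it is Gorenstein$^*$, so in particular for $F=\tohi$ we get $\tilde H_i(\Delta;K)=0$ for $i<\dim\Delta$ and $\tilde H_{\dim\Delta}(\Delta;K)\cong K$. By the Euler--Poincar\'e formula the reduced Euler characteristic equals the alternating sum of the reduced Betti numbers, whence $\tilde\chi(\Delta)=(-1)^{\dim\Delta}=(-1)^{\alpha-1}$. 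Equivalently, this is exactly the $F=\tohi$ instance of the Euler complex condition $(*)$, since a Gorenstein$^*$ complex is an Euler complex. Combining with $I(G,-1)=-\tilde\chi(\Delta)$ yields $I(G,-1)=(-1)^\alpha$, as required.

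I expect the main obstacle to be the clean passage from the commutative-algebra hypothesis (Gorensteinness of $S/I(G)$) to the topological statement $\tilde\chi(\Delta)=(-1)^{\dim\Delta}$. This rests on correctly invoking the core reduction together with Stanley's theorem, and on matching the various sign and indexing conventions ($f_{-1}=1$, $a_i=f_{i-1}$, $\dim\Delta=\alpha-1$) so that the final sign comes out as $(-1)^\alpha$ rather than $(-1)^{\alpha-1}$. Once these bookkeeping points are pinned down, the computation is immediate.
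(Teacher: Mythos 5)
Your proof is correct and takes essentially the same route as the paper's: both use the no-isolated-vertex hypothesis to show $\core(\Delta)=\Delta$ and then apply the fact that a Gorenstein complex equal to its core satisfies the Euler condition $(*)$ at $F=\tohi$, followed by the identical sign computation $I(G,-1)=-\sum_{j=-1}^{\alpha-1}(-1)^jf_j=(-1)^{\alpha(G)}$. The only cosmetic difference is that the paper cites the Euler-complex theorem of Bruns--Herzog directly, whereas you derive that same identity from Stanley's Gorenstein$^*$ homology-sphere characterization via Euler--Poincar\'e, and you yourself note this equivalence.
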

\begin{proof}
Let $\Delta=\Delta(\b G)$ and $d=\dim \Delta=\alpha(G)-1$. If $v\in \V(G)$, then as $v$ is not isolated, there is a
$u\in \V(G)$ such that $uv\in \E(G)$. Hence $G=\{u\}\in \Delta$ and $G\cup\{v\} \notin \Delta$. This means that $v\in
\core(\Delta)$ and hence $\core(\Delta)=\Delta$. So according to \cite[Theorem 5.5.2]{CM ring}, $\Delta$ is an Euler
complex. In particular, noting that $\link_\Delta(\tohi)= \Delta$, if we write $(*)$ with $F=\tohi$, we get
$\sum_{i=-1}^{d} (-1)^i f_i= (-1)^{d}$ where $(f_i)_{i=-1}^d$ is the $f$-vector of $\Delta$. Thus
\begin{align}
I(G,-1)& = \sum_{i=0}^{\alpha(G)} a_i(-1)^i= \sum_{i=0}^{\alpha(G)}(-1)^i f_{i-1} \notag \\
& = -\sum_{j=-1}^{d} (-1)^j f_j  \notag\\
& = (-1)^{d+1}=(-1)^{\alpha(G)}. \notag
\end{align}
\end{proof}
\begin{rem}\label{no isol => Euler}
The above argument shows that if $G$ has no isolated vertex, then $\Delta(\b G)=\core(\Delta(\b G))$ and if moreover
$G$ is Gorenstein over some field, then $\Delta(\b G)$ is an Euler complex.
\end{rem}

Suppose that $F\se \V(G)$. Here by $\N[F]$ we mean $F\cup \{v\in \V(G)| uv\in \E(G)$ for some $u\in F\}$ and we set
$G_F=G\sm \N[F]$.  In particular, for $F=\tohi$ we have $\N[F]=\tohi$ and $G_F=G$. Our next theorem gives an equivalent
condition on graphs with $\alpha\geq 2$ for being Gorenstein. Recall that if $\alpha(G)=1$, then $G$ is Gorenstein
\ifof $G$ is isomorphic to $K_1$ or $K_2$. Note that if $\Delta$ is a simplicial complex with dimension at most one,
then we can view $\Delta$ as a graph on vertex set $\V(\Delta)$ by considering 1-dimensional faces of $\Delta$ as
edges.

\begin{thm}\label{goren graphs}
Suppose that $G$ is a graph without any isolated vertex and $\alpha(G)\geq 2$. Then $G$ is Gorenstein over $K$ \ifof
all of the following conditions hold:
\begin{enumerate}
\item \label{goren graphs 1} $G$ is CM over $K$;

\item \label{goren graphs 2} $I(G,-1)= (-1)^{\alpha(G)}$;

\item \label{goren graphs 3} For each independent set $F$ of $G$ with size $\alpha(G)-2$, the graph $\b {G_F}$ is a
    cycle of length at least $4$.
\end{enumerate}
\end{thm}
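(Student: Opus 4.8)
The plan is to pass to the Stanley--Reisner side and use the Euler-complex description of Gorensteinness. Write $\Delta=\Delta(\b G)$ and note $\dim\Delta=\alpha(G)-1$. Since $G$ has no isolated vertex, Remark~\ref{no isol => Euler} gives $\core(\Delta)=\Delta$, so by \cite[Theorem 5.5.2]{CM ring} the graph $G$ is Gorenstein over $K$ \ifof $\Delta$ is CM over $K$ and $\Delta$ is an Euler complex, i.e.\ the relation $(*)$ holds at every face $F\in\Delta$. The one computation I use repeatedly is the link identification $\link_\Delta(F)=\Delta(\b{G_F})$: a vertex $w\notin F$ lies in $\link_\Delta(F)$ exactly when $w\notin\N[F]$, and then a subset $B$ of these vertices is a face \ifof $B$ is independent. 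Consequently $\dim\link_\Delta(F)=\alpha(G)-|F|-1$, so the independent sets $F$ of size $\alpha(G)-2$ in \pref{goren graphs 3} are exactly those whose links are $1$-dimensional.

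Suppose first that $G$ is Gorenstein over $K$. Then $S/I(G)$ is CM, giving \pref{goren graphs 1}, and Lemma~\ref{lem1} is precisely \pref{goren graphs 2}. For \pref{goren graphs 3}, the equality $\core(\Delta)=\Delta$ upgrades Gorensteinness to Gorenstein$^{*}$, so every link of $\Delta$ is a $K$-homology sphere. When $|F|=\alpha(G)-2$, the link $\Delta(\b{G_F})$ is a $1$-dimensional $K$-homology sphere whose own vertex links are $0$-spheres; being connected with every vertex of degree $2$, it is a single cycle. Since $\Delta(\b{G_F})$ is the clique complex of $\b{G_F}$ and has no $2$-face, $\b{G_F}$ is triangle-free, hence a cycle of length at least $4$.

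Conversely, assume \pref{goren graphs 1}--\pref{goren graphs 3}; I must verify that $\Delta$ is an Euler complex. The relation $(*)$ is immediate once $\dim\link_\Delta(F)\le1$: it is vacuous for facets, it is \pref{goren graphs 3} when the link is $1$-dimensional, and when the link is $0$-dimensional I write $F=F'\cup\{w\}$ with $|F'|=\alpha(G)-2$, so that $\link_\Delta(F)=\link_{\Delta(\b{G_{F'}})}(\{w\})$ is the link of a vertex in a cycle, namely two points. To reach the deeper faces I induct on $\alpha(G)$, the base case $\alpha(G)=2$ being \pref{goren graphs 3} itself (there $\b G$ is a cycle of length at least $4$, recovering Corollary~\ref{goren alpha=2}). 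In the inductive step I pass, for each vertex $v$, to $H_v:=G_{\{v\}}=G\sm\N[v]$, whose independence complex is $\link_\Delta(\{v\})$ and which satisfies $\alpha(H_v)=\alpha(G)-1$ by purity of links of CM complexes. Here $H_v$ is again CM over $K$; it inherits \pref{goren graphs 3}, since its codimension-two links are codimension-two links of $\Delta$; and it has no isolated vertex, for such a vertex would be a cone point of $\link_\Delta(\{v\})$, forcing some $1$-dimensional link of $\Delta$ to be a contractible cone instead of a cycle.

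The only hypothesis that does not transfer formally is \pref{goren graphs 2} for $H_v$. By the deletion recursion $I(G,-1)=I(G\sm v,-1)-I(H_v,-1)$ (with $G\sm v$ the graph $G$ with $v$ deleted) and \pref{goren graphs 2} for $G$, asking that $I(H_v,-1)=(-1)^{\alpha(G)-1}$ is equivalent to $I(G\sm v,-1)=0$ for every $v$; and since $I(G\sm v,-1)=-\tilde\chi(\Delta\sm v)$ while $\tilde\chi(\link_\Delta(\{v\}))=\tilde\chi(\Delta\sm v)-\tilde\chi(\Delta)$, this is exactly the statement that the Euler relation $(*)$ descends from $\Delta$ to each of its vertex links. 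I expect this descent to be the crux of the argument: conditions \pref{goren graphs 1} and \pref{goren graphs 3} already make the link of every face of codimension at most two a $K$-homology sphere, and the real problem is to propagate the single global constraint \pref{goren graphs 2} inward to every vertex link. Granting it, the induction hypothesis makes each $\link_\Delta(\{v\})$ Gorenstein$^{*}$, while \pref{goren graphs 1} and \pref{goren graphs 2} make $\Delta=\link_\Delta(\tohi)$ a $K$-homology sphere; as the link of any nonempty face $F$ equals $\link_{\link_\Delta(\{v\})}(F\sm\{v\})$ for any $v\in F$, every link of $\Delta$ is then a $K$-homology sphere, so $\Delta$ is Gorenstein$^{*}$ and $G$ is Gorenstein over $K$.
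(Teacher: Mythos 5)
Your forward direction is correct, and it takes a genuinely different route from the paper's: where the paper excludes the degenerate links ($\b{G_F}$ a single edge, a path of length two, or a triangle) by hand, using the W$_2$ property from Lemma \ref{W2}\pref{Goren=> W2} and $\alpha(G_F)=2$, you read off connectedness and $2$-regularity from the homology-sphere description of links in a Gorenstein complex with $\core(\Delta)=\Delta$, and use flagness of $\Delta(\b{G_F})$ to exclude the triangle; both work. The converse, however, has a genuine gap, and you name it yourself: after reducing everything to the descent statement $I(G\sm v,-1)=0$, equivalently $I(G_v,-1)=(-1)^{\alpha(G)-1}$, for every vertex $v$ --- that is, to the Euler relation $(*)$ holding at vertex links and not merely at $\tohi$ --- you write ``Granting it'' and move on. Nothing in your argument proves this, and it is not a routine verification: it is essentially the entire content of the hard implication. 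Conditions \pref{goren graphs 1} and \pref{goren graphs 3} are inherited by vertex links, as you check, but \pref{goren graphs 2} is a single global constraint, and no formal manipulation of the recursion $I(G,-1)=I(G\sm v,-1)-I(G_v,-1)$ extracts the local constraints from it.

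To see that the missing step cannot be closed link-by-link, note that over $K=\Q$ a flag triangulation $\Gamma$ of the real projective plane is the independence complex of a graph $H$ with $\alpha(H)=3$ and no isolated vertices; $H$ is CM over $\Q$ by Reisner's criterion, and every vertex link of $\Gamma$ is an induced cycle of length at least $4$ (flagness in a pure $2$-complex forbids $3$-cycle links), so $H$ satisfies \pref{goren graphs 1} and \pref{goren graphs 3}; yet $I(H,-1)=-\tilde\chi(\Gamma)=0\neq(-1)^3$, so $H$ fails \pref{goren graphs 2} and is not Gorenstein over $\Q$. When $\alpha(G)=4$, nothing in your inductive bookkeeping prevents some $\link_\Delta(\{v\})$ from being exactly such a $\Gamma$; ruling this out requires a global duality or counting argument, not propagation of local data. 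This is precisely what the paper outsources: its converse is a two-line reduction to the equivalence of (a) and (e) in \cite[Chapter II, Theorem 5.1]{stanley}, whose condition (e) the paper verifies from exactly your data --- CM, the single Euler relation at $\tohi$, the $1$-dimensional links being cycles, and $\core(\Delta)=\Delta$ --- so the descent you are missing lives inside Stanley's proof of (e) $\Rightarrow$ (a), via the canonical module of a Cohen--Macaulay Stanley--Reisner ring. Relatedly, your opening claim that \cite[Theorem 5.5.2]{CM ring} yields the equivalence ``Gorenstein over $K$ \ifof CM and Euler'' overstates that citation: the paper invokes it only for the implication Gorenstein $\Rightarrow$ Euler when $\core(\Delta)=\Delta$. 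The converse implication is in fact true, but even granted in full it would only rename your obligation --- you would still have to deduce $(*)$ at every face from \pref{goren graphs 1}--\pref{goren graphs 3}, which is the unproven crux.
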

\begin{proof}
(\give): By Lemma \ref{lem1}, $I(G,-1)=(-1)^{\alpha(G)}$ and $G$ is CM over $K$ by definition. If $G=K_2$, then it has
no independent set of size 2 and hence \pref{goren graphs 3} holds trivially. So we assume that $G\neq K_2$.

Suppose that $F$ is an independent set of $G$ with size $\alpha(G)-2$. First we claim that $\b {G_F}$ is not a triangle
or a path of length 1 or 2. Suppose that the claim is not true. First, assume that $\b {G_F}$ consists of just an edge
$ab$. Let $A=F\cup\{a\}$ and $B=\{b\}$ which are two disjoint independent sets of $G$. By Lemma \ref{W2}\pref{Goren=>
W2}, $G$ is W$_2$ and there exist independent sets $A_0$ and $B_0$ of size $\alpha(G)$ such that $A\se A_0$, $B\se B_0$
and $A_0\cap B_0=\tohi$. But the only vertex of $G$ which is not adjacent to any vertex of $A$ in $G$ is $b$, so
$A_0=A\cup\{b\}$. But $b\in B\se B_0$ which contradicts $A_0\cap B_0=\tohi$. From this contradiction it follows that
$\b {G_F}$ is not a path of length 1.

Now assume that $\b {G_F}$ is a path of length 2 with edges $ab$ and $bc$. Let $A=\{a,b\}$ and $B=F\cup \{c\}$. Again
$A$ and $B$ are disjoint independent sets of $G$ and since $G$ is W$_2$, there exist independent sets $A_0$ and $B_0$
of size $\alpha(G)$ such that $A\se A_0$, $B\se B_0$ and $A_0\cap B_0=\tohi$. But $b$ is the only vertex of $G$ not
adjacent to any vertex of $B$. Therefore $b\in A_0\cap B_0$, which is a contradiction, thus $\b {G_F}$ is not a path of
length 2. Also we have $\alpha(G_F)=2$ and hence $\b{G_F}$ can not be a triangle. This concludes the proof of the
claim.

Note that because $\Delta=\Delta(\b G)$ contains all vertices of $G$, it is not empty and since $G\neq K_1,K_2$, its
independence complex $\Delta$ does not consist of only one or two isolated vertices. Also by Remark \ref{no isol =>
Euler}, $\core(\Delta)= \Delta$. Hence by equivalence of parts (a) and (e) of \cite[Chapter II, Theorem 5.1]{stanley},
$\link_\Delta(F)$ is either a cycle or a path of length 1 or 2 (when viewed as a graph, as mentioned in the notes
before this theorem). Let $A\in \link_\Delta(F)$, then there is a $B\in \Delta$ containing $F$, such that $A=B\sm F$.
So $B=A\cup F$ is an independent set of $G$. Hence $A$ is an independent set of $G_F$, that is, $A\in \Delta(\b{G_F})$.
Conversely, if $A\in \Delta(\b{G_F})$, then $A\cup F$ is an independent set of $G$ and hence $A\in \link_\Delta(F)$.
Therefore, $\link_\Delta(F)=\Delta(\b{G_F})$. If we view this 1-dimensional simplicial complex as a graph, then
$\Delta(\b{G_F})= \b{G_F}$. Hence $\b{G_F}$ is either a cycle or a path of length 1 or 2. But as we showed above,
$\b{G_F}$ can not be a triangle or a path of length 1 or 2. Consequently, $\b{G_F}$ is a cycle of length at least 4, as
required.

(\rgive): Let $F$ be an independent set of size $\alpha(G)-2$ and $\Delta=\Delta(\b G)$. According to the argument in
the above paragraph, $\link_\Delta(F)=\Delta(\b{G_F})=\b{G_F}$. Therefore, $\link_\Delta(F)$ is a cycle for each
independent set of size $\alpha(G)-2$ of $G$. Also $\core(\Delta)=\Delta$ by Remark \ref{no isol => Euler}. Hence the
result follows from the equivalence of (a) and (e) of \cite[Chapter II, Theorem 5.1]{stanley}.
\end{proof}

\begin{rem}
Note that Theorem \ref{goren graphs} does not classify all Gorenstein graphs with $\alpha \geq 2$. In fact, for this we
first need to classify CM graphs, which is a hard task.
\end{rem}

As an immediate corollary of the previous theorem, we get the following characterization of Gorenstein graphs with
$\alpha(G)=2$.
\begin{cor}\label{goren alpha=2}
Suppose that $G$ is a graph without any isolated vertex and $\alpha(G)=2$. Then the following are equivalent.
\begin{enumerate}
\item \label{goren alpha=2 1} $G$ is Gorenstein over some field.
\item \label{goren alpha=2 2} $G$ is Gorenstein over every field.
\item \label{goren alpha=2 3} $G$ is the complement of a cycle of length at least four.
\end{enumerate}
\end{cor}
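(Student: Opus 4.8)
The plan is to obtain the whole equivalence directly from Theorem \ref{goren graphs} specialized to the case $\alpha(G)=2$. The key observation is that when $\alpha(G)=2$ the only independent set of size $\alpha(G)-2=0$ is the empty set, and $G_\tohi=G\sm\N[\tohi]=G$ since $\N[\tohi]=\tohi$. Hence condition \pref{goren graphs 3} of Theorem \ref{goren graphs} collapses to the single requirement that $\b{G_\tohi}=\b G$ be a cycle of length at least four, which is exactly condition \pref{goren alpha=2 3} here.

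With this reduction in hand, the implication \pref{goren alpha=2 1}$\,\Rightarrow\,$\pref{goren alpha=2 3} is immediate: if $G$ is Gorenstein over some field, then Theorem \ref{goren graphs} gives condition \pref{goren graphs 3}, which by the reduction says precisely that $\b G$ is a cycle of length at least four.

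Next I would establish \pref{goren alpha=2 3}$\,\Rightarrow\,$\pref{goren alpha=2 2} by verifying all three conditions of Theorem \ref{goren graphs} over an arbitrary field $K$, assuming $\b G=C_n$ with $n\geq 4$. Condition \pref{goren graphs 3} is exactly the hypothesis. For condition \pref{goren graphs 2} I would compute $I(G,-1)=a_0-a_1+a_2$, where $a_0=1$, $a_1=n$ is the number of vertices, and $a_2$ counts the size-two independent sets of $G$, i.e. the edges of $\b G=C_n$, so $a_2=n$; hence $I(G,-1)=1-n+n=1=(-1)^{\alpha(G)}$. For condition \pref{goren graphs 1} I would note that the independence complex $\Delta(\b G)$ is the clique complex of $C_n$, and since $n\geq 4$ the cycle is triangle-free, so this complex is just $C_n$ viewed as a one-dimensional simplicial complex; a one-dimensional complex is Cohen--Macaulay over every field \ifof it is connected, and a cycle is connected, so $G$ is CM over every field. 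Finally, \pref{goren alpha=2 2}$\,\Rightarrow\,$\pref{goren alpha=2 1} is trivial.

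The only step requiring genuine care is the Cohen--Macaulay verification in \pref{goren alpha=2 3}$\,\Rightarrow\,$\pref{goren alpha=2 2}, which I would settle by the standard fact (via Reisner's criterion) that a connected graph, regarded as a one-dimensional simplicial complex, is Cohen--Macaulay over every field; the independence-polynomial count and the reduction of condition \pref{goren graphs 3} are routine.
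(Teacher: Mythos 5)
Your proposal is correct and follows essentially the same route as the paper's own proof: both derive \pref{goren alpha=2 1}$\,\Rightarrow\,$\pref{goren alpha=2 3} from Theorem \ref{goren graphs}\pref{goren graphs 3} with $F=\tohi$, and both prove \pref{goren alpha=2 3}$\,\Rightarrow\,$\pref{goren alpha=2 2} by checking the theorem's three conditions, using the computation $I(G,-1)=1-n+n=1$ and the fact that a connected one-dimensional complex (here the cycle $\Delta(\b G)$) is Cohen--Macaulay over every field, which is exactly the paper's appeal to \cite[Exercise 5.1.26(c)]{CM ring}.
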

\begin{proof}
\pref{goren alpha=2 2} \give\ \pref{goren alpha=2 1} is trivial and \pref{goren alpha=2 1} \give\ \pref{goren alpha=2
3} follows from Theorem \ref{goren graphs}\pref{goren graphs 3} with $F=\tohi$.

\pref{goren alpha=2 3} \give\ \pref{goren alpha=2 2}: Suppose that $G$ is the complement of an $n$-cycle with $n\geq
4$. If we view $\Delta(\b G)$ as a graph, then $\Delta(\b G)$ is a cycle and hence is connected. Therefore, by
\cite[Exercise 5.1.26(c)]{CM ring}, $\Delta(\b G)$ or equivalently $G$ is CM over every field. Also $I(G,x)= 1+nx+nx^2$
and $I(G,-1)=1= (-1)^{\alpha(G)}$. Thus the result follows from Theorem \ref{goren graphs}.
\end{proof}

\begin{rem}
Note that item \eqref{goren alpha=2 3} of the above corollary determines a large class of Gorenstein graphs
combinatorially.
\end{rem}

Next we find some properties of Gorenstein graphs with $\alpha(G)=3$. For this, we need a lemma.
\begin{lem}\label{lem2}
Assume that $G$ is a graph without any isolated vertex and Gorenstein over a field $K$ and suppose that $\alpha(G)$ is
odd, then $I(G,-\f{1}{2})=0$.
\end{lem}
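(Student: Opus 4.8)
The plan is to route everything through the $h$-polynomial of $\Delta=\Delta(\b G)$ and then exploit the symmetry of its $h$-vector, which is forced by the Euler complex structure. First I would record the setup: since $G$ has no isolated vertex and is Gorenstein over $K$, Remark \ref{no isol => Euler} guarantees that $\Delta=\Delta(\b G)$ is an Euler complex. Recall that the $f$-vector of $\Delta$ is exactly the coefficient sequence $(a_0,\ldots,a_{\alpha})$ of $I(G,x)$, and that $d=\dim\Delta+1=\alpha$, writing $\alpha=\alpha(G)$.

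Next I would establish the precise link between $I(G,x)$ and the $h$-polynomial. Starting from the definition $h(t)=\sum_{i=0}^{\alpha} f_{i-1}t^i(1-t)^{\alpha-i}=\sum_{i=0}^{\alpha} a_i\, t^i(1-t)^{\alpha-i}$ and factoring out $(1-t)^{\alpha}$, I get $h(t)=(1-t)^{\alpha}\,I\!\left(G,\frac{t}{1-t}\right)$. The value $x=-\frac{1}{2}$ corresponds to $t=-1$, because $\frac{t}{1-t}=-\frac{1}{2}$ holds exactly when $t=-1$. Substituting $t=-1$ then yields $h(-1)=2^{\alpha}\,I\!\left(G,-\frac{1}{2}\right)$, so the claim $I(G,-\frac{1}{2})=0$ is equivalent to $h(-1)=0$.

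Finally I would invoke the Dehn--Sommerville relations. Because $\Delta$ is an Euler complex, its $h$-vector is palindromic, that is $h_i=h_{\alpha-i}$ for all $i$. Reindexing the alternating sum $h(-1)=\sum_{i=0}^{\alpha}(-1)^i h_i$ by $i\mapsto \alpha-i$ and applying this symmetry gives $h(-1)=(-1)^{\alpha}h(-1)$. Since $\alpha$ is odd we have $(-1)^{\alpha}=-1$, hence $h(-1)=-h(-1)$, which forces $h(-1)=0$ and therefore $I(G,-\frac{1}{2})=0$, as required.

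The step I expect to be the main obstacle is the justification of the palindromic property $h_i=h_{\alpha-i}$: this does not follow directly from the defining identity $(*)$ of an Euler complex stated above, but is precisely the content of the Dehn--Sommerville equations for Eulerian complexes. The crux of writing this up cleanly is to pin down and cite a suitable version of that result (for instance in \cite{stanley} or \cite{CM ring}); once the symmetry is in hand, the remaining generating-function manipulation and the parity argument are entirely routine.
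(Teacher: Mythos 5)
Your proposal is correct and follows essentially the same route as the paper: both pass to $\Delta=\Delta(\b G)$, use Remark \ref{no isol => Euler} to get the Euler complex property, invoke the Dehn--Sommerville symmetry $h_i=h_{\alpha-i}$ (the paper cites \cite[Theorem 5.4.2]{CM ring} for exactly this, resolving the citation issue you flagged), and deduce $h(-1)=0$ from the oddness of $\alpha$. Your identity $h(-1)=2^{\alpha}I\l(G,-\f{1}{2}\r)$ is the same computation the paper carries out term by term, merely packaged as the substitution $t\mapsto t/(1-t)$ in the generating function.
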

\begin{proof}
Let $\Delta=\Delta(\b G)$ and $\alpha=\alpha(G)$. By Remark \ref{no isol => Euler}, $\Delta$ is an Euler complex of
dimension $\alpha-1$. Thus by \cite[Theorem 5.4.2]{CM ring}, we have $h_i=h_{\alpha-i}$, where $h(t)= \sum_{i=0}^\alpha
h_it^i= \sum_{i=0}^\alpha f_{i-1}t^i(1-t)^{\alpha-i}$ is the $h$-polynomial of $\Delta$. Since $\alpha$ is odd, this
means that $h(-1)=0$. Now
\begin{align}
I\l(G,-\f{1}{2} \r)& = \sum_{i=0}^\alpha a_i \l(-\f{1}{2} \r)^i \notag\\
& =\sum_{i=0}^\alpha f_{i-1} \l(-\f{1}{2} \r)^i\notag  \\
 &= \f{1}{2^\alpha} \sum_{i=0}^\alpha f_{i-1} (-1)^i(1-(-1))^{\alpha-i} \notag \\
 & = \f{1}{2^\alpha} h(-1)=0. \notag
\end{align}
\end{proof}

Now we can prove a theorem which determines the number of edges and the independence polynomial of Gorenstein graphs
with independence number three.
\begin{thm}\label{number of edges}
Suppose that $G$ is a graph with $n$ vertices and $m$ edges and without any isolated vertex. Also assume that $G$ is
Gorenstein over some field and $\alpha(G)=3$. Then the following hold.
\begin{enumerate}
\item \label{number of edges 1} $m=\f{n^2-7n+12}{2}$.
\item \label{number of edges 2} $G$ has exactly $2n-4$ independent sets of size three and exactly $3n-6$ independent
    sets of size two.
\item \label{number of edges 3} $I(G,x)=1+nx+(3n-6)x^2+(2n-4)x^3$.
\end{enumerate}
\end{thm}
\begin{proof}
Suppose that $I(G,x)=\sum_{i=0}^{\alpha(G)} a_i x^i$ is the independence polynomial of $G$. Then $a_0=1$, $a_1=n$ and
$a_2$ and $a_3$ are the number of independent sets of size two and three of $G$, respectively. According to Lemmas
\ref{lem1} and \ref{lem2}, $I(G,-1)= (-1)^3=-1$ and $I(G,-\f{1}{2})= 0$. It follows that $a_2-a_3=n-2$ and $2a_2-a_3=
4n-8$. Therefore $a_2=3n-6$ and $a_3=2n-4$, which proves \pref{number of edges 2} and \pref{number of edges 3}. On the
other hand, independent sets of size two are exactly those 2-subsets $\{u,v\}$ of $\V(G)$ such that $uv\notin \E(G)$.
Thus $3n-6= a_2={n \choose 2} - m$ and $m=\f{n^2-7n+12}{2}$.
\end{proof}

In what follows, we apply the previous results to find all triangle-free Gorenstein graphs with $\alpha=3$. It should
be mentioned that this result also follows by using \cite[Theorem 4.4]{tri-free} and \cite[Theorem 3.8]{symbolic}, but
the proof of these theorems use much algebraic background, against the following graph theoretic proof. In what
follows, by $G_v$ we mean $G_{\{v\}}=G\sm \N[v]$, where $v\in \V(G)$. Also $C_n$ denotes the $n$-vertex cycle.
\begin{thm}\label{tri-free-ind3}
Suppose that $G$ is triangle-free, $\alpha(G)=3$ and $G$ has no isolated vertices. Then the following are equivalent:
\begin{enumerate}
\item \label{tri-free-ind3 1} $G$ is Gorenstein over every field;

\item \label{tri-free-ind3 2} $G$ is Gorenstein over some field;

\item \label{tri-free-ind3 3} $G$ is W$_2$.

\item \label{tri-free-ind3 4} $G$ is isomorphic to one of the graphs depicted in Figure \ref{fig}.
\end{enumerate}
\end{thm}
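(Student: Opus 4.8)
The plan is to obtain the equivalence of \pref{tri-free-ind3 1}, \pref{tri-free-ind3 2} and \pref{tri-free-ind3 3} almost for free from Lemma \ref{W2}, and then to reduce \pref{tri-free-ind3 4} to a finite, and in fact short, combinatorial search. First I would record the cheap implications: \pref{tri-free-ind3 1} $\Rightarrow$ \pref{tri-free-ind3 2} is trivial; \pref{tri-free-ind3 2} $\Rightarrow$ \pref{tri-free-ind3 3} is exactly Lemma \ref{W2}\pref{Goren=> W2}; and \pref{tri-free-ind3 3} $\Rightarrow$ \pref{tri-free-ind3 1} is Lemma \ref{W2}\pref{tri-free Goren}, using that $G$ is triangle-free with no isolated vertex. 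Thus \pref{tri-free-ind3 1}, \pref{tri-free-ind3 2} and \pref{tri-free-ind3 3} are equivalent before any graph theory is done, and the real content of the theorem is the identification of this class with the list in Figure \ref{fig}.

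For \pref{tri-free-ind3 2} $\Rightarrow$ \pref{tri-free-ind3 4}, assume $G$ is Gorenstein over some field. Since $G$ is triangle-free, the neighborhood $\N(v)$ of any vertex is an independent set, so $\deg(v)=|\N(v)|\le \alpha(G)=3$. The decisive leverage is Theorem \ref{goren graphs}\pref{goren graphs 3} applied to the singleton $F=\{v\}$, which is an independent set of size $\alpha(G)-2=1$: it shows that $\b{G_v}$ is a cycle $C_k$ with $k\ge 4$. As $G_v$ is an induced subgraph of the triangle-free graph $G$, the complement $\b{G_v}=C_k$ must satisfy $\alpha(C_k)\le 2$, forcing $k\in\{4,5\}$, i.e. $G_v\cong 2K_2$ or $G_v\cong C_5$. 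Counting vertices gives $\deg(v)=n-1-k\in\{n-5,n-6\}$ for every $v$. If $p$ vertices have degree $n-5$, then comparing $2m$ computed from the degree sequence with the value $m=(n^2-7n+12)/2$ from Proposition \ref{number of edges}\pref{number of edges 1} yields $p=12-n$; together with $\deg(v)\le 3$ and the absence of isolated vertices, this pins $n$ to $\{6,7,8\}$ and fixes how many vertices carry each degree.

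It then remains to enumerate the graphs for each admissible $n$. For $n=6$ every vertex has degree $1$, so $G$ is a perfect matching $3K_2$. For $n=7$ exactly two vertices have degree $1$; deleting the closed neighborhood of one of them produces a $C_5$, and locating the second degree-one vertex forces $G\cong C_5\sqcup K_2$. For $n=8$ I would fix a vertex $v$ of degree $3$, so that $\N(v)$ is an independent triple and $G_v\cong 2K_2$ on the remaining four vertices; the leftover edges form a bipartite graph between $\N(v)$ and $\V(G_v)$ whose two sides have prescribed degrees. The triangle-free condition, namely that no vertex of $\N(v)$ may be joined to both ends of an edge of the $2K_2$, together with the forced degree sequence and $\alpha(G)=3$, then determines this bipartite part up to isomorphism, leaving a single connected graph. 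In each case $G$ is one of the graphs of Figure \ref{fig}.

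Finally, for \pref{tri-free-ind3 4} $\Rightarrow$ \pref{tri-free-ind3 3} I would verify, graph by graph in Figure \ref{fig}, that each is a W$_2$ graph; invoking \pref{tri-free-ind3 3} $\Rightarrow$ \pref{tri-free-ind3 1} then closes the chain of equivalences. I expect the genuine obstacle to be the $n=8$ step: one must argue that the case analysis on the bipartite cross-edges between $\N(v)$ and $\V(G_v)$ is exhaustive, that the triangle-free and $\alpha=3$ constraints really do eliminate every configuration except the one drawn, and that no two surviving configurations are accidentally isomorphic. The lower-$n$ cases and the verification of the W$_2$ property are routine finite checks by comparison.
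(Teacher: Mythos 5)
Your proposal is correct, and its skeleton matches the paper's: equivalence of \pref{tri-free-ind3 1}--\pref{tri-free-ind3 3} via Lemma \ref{W2}, then Theorem \ref{goren graphs}\pref{goren graphs 3} applied to singletons to force $G_v\cong\b{C_4}$ or $\b{C_5}$, the degree-sequence computation $a=12-n$ against Proposition \ref{number of edges}, and a finite enumeration for each admissible $n$, with \pref{tri-free-ind3 4} $\Rightarrow$ \pref{tri-free-ind3 3} done by direct verification. But you diverge at two substantive points, both defensibly. First, your route to $n\le 8$ is genuinely simpler than the paper's: you observe that triangle-freeness makes $\N(v)$ independent, so $\deg(v)\le\alpha(G)=3$, which with $a=12-n$ immediately kills $n\ge 9$ (for $9\le n\le 11$ some vertex has degree $n-5\ge 4$, and for $n=12$ all degrees would be $6$). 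The paper only gets $6\le n\le 12$ from $a,b\ge 0$ and then spends a separate paragraph counting cross-edges into the $C_5$ (each vertex of $\N(v)$ sends at most two, yielding $n^2-13n+38\le 0$); your degree bound renders that count unnecessary. Second, the $n=8$ endgame: the paper pins down the five cross-edges by invoking Theorem \ref{goren graphs} a \emph{second} time, applied to $G_x$ for the unique vertex $x$ of the $2K_2$ with two neighbors in $\N(v)$, to conclude $G_x\cong\b{C_4}$ and hence that $u,w$ avoid $v_2$. You propose instead to finish with only triangle-freeness, the forced degrees, and $\alpha(G)=3$; you flagged this as the step needing care and left it unexecuted, but it does work. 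Indeed, the cross-degree count forces exactly one vertex of the $2K_2$ (say $x$, with partner $y$ and with $uw$ the other edge) to have two neighbors in $\N(v)$, say $v_1,v_3$, whereupon $y$ must attach to $v_2$ and $u,w$ must attach to distinct $v_i$'s; in the rival placement where $u,w$ go to $\{v_1,v_2\}$ (or symmetrically $\{v_2,v_3\}$), the set $\{v_1,v_3,y,w\}$ (with $w$ the vertex attached to $v_2$) is independent of size $4$, contradicting $\alpha(G)=3$, and the two surviving placements are isomorphic, giving Figure \ref{fig}(c). So your version trades the paper's second appeal to the Gorenstein criterion for an elementary independence check --- a modest but real simplification once that check is written out. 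A last cosmetic difference: you close the cycle via \pref{tri-free-ind3 2} $\Rightarrow$ \pref{tri-free-ind3 3} $\Rightarrow$ \pref{tri-free-ind3 1} using Lemma \ref{W2}\pref{Goren=> W2} and \pref{tri-free Goren}, whereas the paper quotes \pref{tri-free-ind3 1} $\iff$ \pref{tri-free-ind3 3} directly and routes \pref{tri-free-ind3 2} through \pref{tri-free-ind3 4}; both orderings are sound.
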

\begin{figure*}
\begin{center}
\includegraphics[scale=0.9]{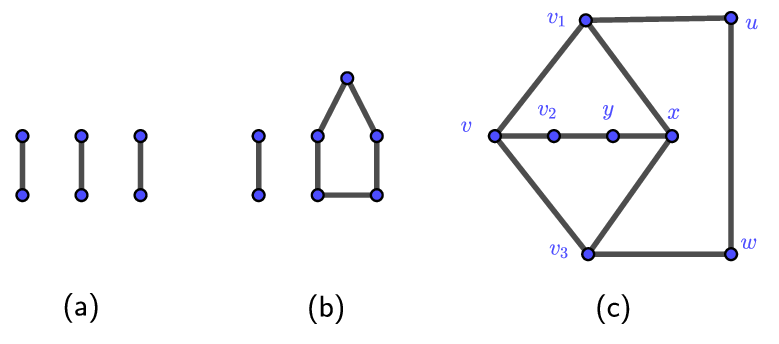}
\end{center}
\caption{Triangle-free Gorenstein graphs without isolated vertex and with $\alpha=3$} \label{fig}
\end{figure*}

\begin{proof}
\pref{tri-free-ind3 1} $\iff$ \pref{tri-free-ind3 3} is an especial case of \cite[Proposition 3.7]{tri-free} and
\pref{tri-free-ind3 1} \give\ \pref{tri-free-ind3 2} is trivial.

\pref{tri-free-ind3 4} \give\ \pref{tri-free-ind3 3}: Let $G$ be one of the graphs in Figure \ref{fig}. For each $v\in
\V(G)$, one can check that $G\sm v$ is well-covered and $\alpha(G\sm v)=\alpha(G)$. According to \cite[Theorem 1]{W2},
this means that $G$ is W$_2$.

\pref{tri-free-ind3 2} \give\ \pref{tri-free-ind3 4}: Suppose that $n=|\V(G)|$ and $m=|\E(G)|$ and let $v\in \V(G)$.
According to Theorem \ref{goren graphs}\pref{goren graphs 3}, $G_v$ is the complement of a cycle on at least 4
vertices. Since complement of any cycle with at least 6 vertices has a triangle, thus $G_v$ is isomorphic to $\b{C_4}$
or $\b{C_5}$. In particular, $n-1-\deg(v)= |\V(G_v)|\in \{4,5\}$, that is, either $\deg(v)=n-5$ or $\deg(v)=n-6$.
Assume that $a$ and $b$ are the number of vertices of $G$ with degree $n-5$ and $n-6$, respectively. Therefore $a+b=n$
and $(n-5)a+(n-6)b=2m=n^2-7n+12$ by Proposition \ref{number of edges}. Solving these two equations for $a$ and $b$, we
get $a=12-n$ and $b=2n-12$. In particular, it follows that $6\leq n\leq 12$.

Suppose that $n\geq 7$. Then as $b>0$, there exists a vertex of $G$, say $v$,  with degree $n-6$. Let $A=\N(v)$ and
$B=\V(G)\sm (\N[v])$. By Theorem \ref{goren graphs}, the subgraph of $G$ induced by $B$ is $\b{C_5}\cong C_5$ and has
five edges. Also there is no edge of $G$ with both end-vertices in $\N(v)$, because $G$ is triangle free. Consequently,
the subgraph of $G$ induced by $\N[v]$ has exactly $n-6$ edges and all the remaining edges of $G$ has one end-vertex in
$A$ and one end-vertex in $B$. Assume that a vertex $x\in A$ is adjacent to at least three vertices $u_1,u_2,u_3$ in
$B$. Since $u_i$'s ($i=1,2,3$) lie on a 5-cycle, at least two of them, say $u_1$ and $u_2$, are adjacent and $xu_1u_2$
is a triangle in $G$, a contradiction. Therefore, each vertex of $A$ can be adjacent to at most two of the vertices in
$B$ and the number of edges between $A$ and $B$ is at most $2|A|=2(n-6)$. Hence $m\leq 2(n-6)+(n-6)+5$ and by applying
Proposition \ref{number of edges}, we deduce that $n^2-13n+38\leq 0$ which only holds for $n\leq 8$. We conclude that
$6\leq n\leq 8$.

If $n=6$, then $a=6$ and $b=0$, that is, $G$ consists of 6 vertices of degree one and is isomorphic to the graph (a) of
Figure \ref{fig}. If $n=7$, then $G$ has $a=5$ vertices of degree 2 and $b=2$ vertices of degree 1. This means that $G$
is the disjoint union of a 5-cycle and an edge, that is, $G$ is isomorphic to Figure \ref{fig}(b).

Now assume that $n=8$ and hence $m=10$ and $G$ has $a=4$ vertices of degree 3 and $b=4$ vertices of degree $2$. Let $v$
be a vertex of degree $3$. Then $G_v$ is $\b{C_4}$ and has two edges, say $xy$ and $uw$. Assume that
$\N(v)=\{v_1,v_2,v_3\}$. Since there is no edge between vertices of $\N(v)$ (because $G$ is triangle-free), the number
of edges with one end-vertex in $\N(v)$ and one end-vertex in $\V(G_v)$ is $m- \deg(v)-|\E(G_v)|=5$. Each of the four
vertices of $G_v$ has degree at least two in $G$ and exactly one in $G_v$. Thus each of them is adjacent to at least
one of the $v_i$'s, and exactly one of the vertices of $G_v$, say $x$, is adjacent to two of the $v_i$'s, say $v_1$ and
$v_3$ (see Figure \ref{fig}(c)). So $v_1$ and $v_3$ are not adjacent to $y$ (else $v_ixy$ is a triangle for $i=1$ or
$3$) and $y$ must be adjacent to $v_2$. According to Theorem \ref{goren graphs}, $G_x$ is also isomorphic to $\b{C_4}$.
But $G_x$ is the subgraph of $G$ induced by $\{u,w,v,v_2\}$ and we know that $uw,vv_2\in \E(G_x)$. Thus there is no
edge in $G$ with one end-vertex in $\{u,w\}$ and one end-vertex in $\{v,v_2\}$. In particular, $u$ and $w$ are not
adjacent to $v_2$ and one of them say $u$ must be adjacent to $v_1$ and the other must be adjacent to $v_3$.
Consequently, $G$ is the graph (c) in Figure \ref{fig}.
\end{proof}

\begin{rem}
Note that parts \eqref{tri-free-ind3 3} and \eqref{tri-free-ind3 4} of the above theorem, present a fully combinatorial
characterization of Gorenstein triangle-free graphs with $\alpha=3$. Also we mention that by this theorem, the only
Gorenstein triangle-free connected graph with independence number 3 is the graph (c) in Figure \ref{fig}.
\end{rem}

It is well-known that a graph is Gorenstein (over $K$) \ifof each of its connected components are Gorenstein (over
$K$). Thus the above remark poses the following question.
\begin{ques}
Is there an infinite family of connected Gorenstein graphs with $\alpha=3$?
\end{ques}
Note that if we remove connectedness from the above question, then the family $\{\b{C_n} \cup K_2| 4\leq n\in
\mathbb{N}\}$, is an easy answer to the question.



\begin{thebibliography}{5}

\bibitem{CM ring} W. Bruns and J\"urgen Herzog, \emph{Cohen-Macaulay Rings}, Cambridge University Press,
    Cambridge, 1993.

\bibitem{hibi} J. Herzog  and T. Hibi, \emph{Monomial Ideals}, Springer-Verlag, London , 2011.

\bibitem{large girth} D. T. Hoang, N. C. Minh and T. N. Trung, Cohen-Macaulay graphs with large girth, \emph{J.
    Algebra  Appl.} \textbf{14}:7, paper No. 1550112, 2015.
%
\bibitem{tri-free}D. T. Hoang and T. N. Trung, A characterization of triangle-free Gorenstein graphs and
    Cohen-Macaulayness of second powers of edge ideals, \emph{J. Algebr. Combin.} \textbf{43}, 325--338, 2016.

\bibitem{survey} V.E. Levit and E. Mandrescu, The independence polynomial of a graph --- a survey,
    \emph{Proceedings  of the 1st International Conference on Algebraic Informatics}, 233--254,
    Aristotle Univ. Thessaloniki, Thessaloniki, 2005.
%
\bibitem{my vdec} A. Nikseresht, Chordality of clutters with vertex decomposable dual and ascent of clutters,
    \emph{J. Combin. Theory ---Ser. A}, \textbf{168}, 318--337, 2019,  arXiv: 1708.07372.

\bibitem{trung} A. Nikseresht and M. R. Oboudi, Trung’s construction and the Charney-Davis conjecture, \emph{Bull.
    Malays. Math. Sci. Soc.}, 2020, available online: https://doi.org/10.1007/s40840-020-00933-8 .

\bibitem{our chordal} A. Nikseresht and R. Zaare-Nahandi, On generalization of cycles and chordality to clutters from
    an algebraic viewpoint, \emph{Algebra Colloq.} \textbf{24}:4, 611--624,  2017.

\bibitem{stanley} R. P. Stanley, \emph{Combinatorics and Commutative Algebra}, Birkh\"auser, Boston, 1996.

\bibitem{frontier} ---, Positivity problems and conjectures in algebraic combinatorics, in: V. Arnold,
    et al (Eds.), \emph{Mathematics: Frontiers and Prospectives}, American Mathematical Society, pp. 295--320, 2000.

\bibitem{W2} J. W. Staples, On some subclasses of well-covered graphs, \emph{J. Graph Theory} \textbf{3},
    197--204, 1979.

\bibitem{symbolic} N. V. Trung and T. M. Tuan, Equality of ordinary and symbolic powers of Stanley-Reisner ideals,
    \emph{J. Algebra} \textbf{328}, 77--93, 2011.

\bibitem{planar goren} T. N. Trung, A characterization of Gorenstein planar graphs, in T. Hibi, ed., \emph{Adv. Stud.
    Pure Math.}, Vol. 77 (2018): \emph{The 50th Anniversary of Gr\"obner Bases}, 399--409, arXiv:1603.00326v2.

\end{thebibliography}
\end{document}